\documentclass[11pt]{article}

\usepackage[utf8]{inputenc}
\usepackage[T1]{fontenc}
\usepackage{iftex}

\usepackage[margin=1in]{geometry}
\usepackage{charter}
\usepackage[square,comma,sort&compress]{natbib}
\usepackage{tabularx}

\usepackage{url}
\usepackage{booktabs}
\usepackage{amsfonts}
\ifPDFTeX
  \ifnum\pdfoutput>0
    \usepackage[protrusion=true,expansion=true,tracking=true]{microtype}
  \else
    \usepackage[protrusion=true,expansion=false,tracking=true]{microtype}
  \fi
\else
  \usepackage[protrusion=true,expansion=true,tracking=true]{microtype}
\fi
\usepackage{nicefrac}
\usepackage{xspace}
\usepackage{bm}
\usepackage{amsmath}
\usepackage{amsthm}
\usepackage{amssymb}
\usepackage{mathtools}
\usepackage[most]{tcolorbox}
\usepackage{xparse}
\usepackage{enumitem}
\usepackage{xcolor}
\usepackage{aliascnt}
\usepackage{hyperref}
\usepackage[capitalize,nameinlink,noabbrev]{cleveref}

\definecolor{darkblue}{rgb}{0.0,0.0,0.45}
\hypersetup{
  colorlinks=true,
  linkcolor=darkblue,
  citecolor=darkblue,
  urlcolor=darkblue
}

\theoremstyle{plain}
\numberwithin{equation}{section}
\newtheorem{theorem}{Theorem}[section]
\numberwithin{theorem}{section}
\newaliascnt{lemma}{theorem}
\newtheorem{lemma}[lemma]{Lemma}
\aliascntresetthe{lemma}
\newaliascnt{proposition}{theorem}
\newtheorem{proposition}[proposition]{Proposition}
\aliascntresetthe{proposition}
\newaliascnt{corollary}{theorem}
\newtheorem{corollary}[corollary]{Corollary}
\aliascntresetthe{corollary}
\newaliascnt{assumption}{theorem}

\aliascntresetthe{assumption}
\theoremstyle{definition}
\newaliascnt{definition}{theorem}

\aliascntresetthe{definition}
\newaliascnt{remark}{theorem}

\aliascntresetthe{remark}
\newaliascnt{example}{theorem}

\aliascntresetthe{example}

\newif\ifshowresearchboxes
\newif\ifshowmotivationboxes
\newif\ifshowinterpretationboxes
\newif\ifshowverificationboxes
\showresearchboxestrue
\showmotivationboxestrue
\showinterpretationboxestrue
\showverificationboxestrue

\NewDocumentEnvironment{motivation}{+b}{\ifshowresearchboxes
    \ifshowmotivationboxes
      \begin{tcolorbox}[
        breakable,
        colback=blue!4,
        colframe=blue!45!black,
        title=Motivation,
        fonttitle=\bfseries]
      #1
      \end{tcolorbox}
    \fi
  \fi
}{}
\NewDocumentEnvironment{interpretation}{+b}{\ifshowresearchboxes
    \ifshowinterpretationboxes
      \begin{tcolorbox}[
        breakable,
        colback=green!4,
        colframe=green!35!black,
        title=Interpretation,
        fonttitle=\bfseries]
      #1
      \end{tcolorbox}
    \fi
  \fi
}{}
\NewDocumentEnvironment{verification}{+b}{\ifshowresearchboxes
    \ifshowverificationboxes
      \begin{tcolorbox}[
        breakable,
        colback=gray!5,
        colframe=gray!50,
        title=Verification,
        fonttitle=\bfseries]
      #1
      \end{tcolorbox}
    \fi
  \fi
}{}

\DeclareMathOperator{\conv}{conv}

\newcommand{\R}{\mathbb R}

\newcommand{\ST}{\operatorname{ST}}
\newcommand{\xc}{\operatorname{xc}}
\newcommand{\xcs}{\operatorname{xcs}}

\newcommand{\Sym}{\mathfrak S}

\newcommand{\card}[1]{\left|#1\right|}

\graphicspath{{images/}}

\makeatletter
\AtBeginDocument{\@ifpackageloaded{cleveref}{\crefname{theorem}{Theorem}{Theorems}
    \Crefname{theorem}{Theorem}{Theorems}
    \crefname{lemma}{Lemma}{Lemmas}
    \Crefname{lemma}{Lemma}{Lemmas}
    \crefname{proposition}{Proposition}{Propositions}
    \Crefname{proposition}{Proposition}{Propositions}
    \crefname{corollary}{Corollary}{Corollaries}
    \Crefname{corollary}{Corollary}{Corollaries}
    \crefname{assumption}{Assumption}{Assumptions}
    \Crefname{assumption}{Assumption}{Assumptions}
    \crefname{definition}{Definition}{Definitions}
    \Crefname{definition}{Definition}{Definitions}
    \crefname{remark}{Remark}{Remarks}
    \Crefname{remark}{Remark}{Remarks}
    \crefname{example}{Example}{Examples}
    \Crefname{example}{Example}{Examples}
  }{}}
\makeatother

\title{Symmetric Extension Complexity of the \\ Spanning Tree Polytope}
\author{Sebastian Pokutta\\
Institute of Mathematics, Technische Universit\"at Berlin and\\
Zuse Institute Berlin, Germany\\
\texttt{pokutta@zib.de}}
\date{\today}

\begin{document}
\maketitle

\begin{abstract}
In this note, we prove a tight lower bound on symmetric extended formulations for the spanning
tree polytope of the complete graph.  More precisely, let $P_{\ST}(K_n)$ be the spanning tree
polytope of $K_n$.  We show that, for all $n\ge13$, every symmetric extended formulation for $P_{\ST}(K_n)$ has at least $\binom n3$ inequalities.  Since the classical Martin formulation has a symmetric formulation
of size $O(n^3)$, this gives
\[
  \xcs(P_{\ST}(K_n))=\Theta(n^3).
\]
\end{abstract}

\section{Introduction}
The spanning tree polytope is one of the most well-known and studied objects in
polyhedral combinatorics.  For the complete graph $K_n$, let
\[
  P_{\ST}(K_n)=\conv\{\chi^T\in\{0,1\}^{E(K_n)}:T\text{ is a spanning tree of }K_n\}.
\]
\citet{Edmonds1970} gave the standard complete linear description of this polytope:
the equation $x(E(K_n))=n-1$, the nonnegativity constraints, and the subtour
inequalities $x(E(S))\le |S|-1$ for nonempty proper subsets $S\subset[n]$.
Later, \citet{Martin1991} gave a polynomial-size extended formulation of size
$O(n^3)$ for $K_n$ (more generally, $O(nm)$ for an $n$-vertex, $m$-edge graph)
using his general separation-to-formulation framework; an $O(n^3)$ formulation
for $K_n$ can alternatively be extracted from the multicommodity-flow technique
that \citet{Wong1980} introduced for the traveling salesman problem, so that the
formulation is often jointly attributed to Wong and Martin
(e.g., \citealp{KhoshkhahTheis2018}).  To the best of the
author's knowledge, no formulation of size $o(n^3)$ is known for $K_n$, while the
best general lower bound arises from the rank or dimension bound $\Omega(n^2)$
via Yannakakis' factorization theorem \citep{Yannakakis1991}.  Therefore
\[
  \Omega(n^2)\le \xc(P_{\ST}(K_n))\le O(n^3).
\]
Closing this gap for the
spanning tree polytope is a notorious open problem (see e.g., \citet{KhoshkhahTheis2018}).  For general background on
extended formulations, we refer the interested reader to
\citep{Goemans2015,ConfortiCornuejolsZambelli2010,Kaibel2011}.

In this paper, we study the \emph{symmetric extension complexity} of $P_{\ST}(K_n)$. The group $S_n$ acts on $K_n$ by relabeling vertices, and hence acts on $P_{\ST}(K_n)$. Many natural extended formulations respect this action; this is what we call a \emph{symmetric extended formulation}.  In particular, as, e.g., pointed out in \citet{KaibelPashkovichTheis2011}, Martin's $O(n^3)$ formulation for the spanning tree polytope is symmetric under vertex relabeling. Thus the question whether symmetry itself already forces a size of $\Omega(n^3)$ naturally arises. We answer this question in the affirmative:

\begin{theorem}[Main theorem]\label{thm:main}
For all $n\ge13$,
\[
  \xcs(P_{\ST}(K_n))\ge \binom n3.
\]
Consequently,
\[
  \xcs(P_{\ST}(K_n))=\Theta(n^3).
\]
\end{theorem}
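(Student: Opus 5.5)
We follow the Yannakakis framework in its symmetric form, as in Kaibel--Pashkovich--Theis and the earlier symmetric lower bounds for matching and TSP. Suppose $Q=\{(x,y): Ax+By\le b\}$ is a symmetric extended formulation of $P_{\ST}(K_n)$ with $m<\binom n3$ inequalities. \emph{Symmetric} means that every $\pi\in S_n$ acts on the extension variables $y$ and permutes the inequalities, compatibly with its action on edges. Then $S_n$ acts on the index set $[m]$ of inequalities, and every orbit has the form $S_n/H$ for a stabilizer $H$ with $[S_n:H]\le m<\binom n3$.

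The first key step is a subgroup-index lemma of Liebeck/Dixon--Mortimer type. For $n\ge 13$, every subgroup $H\le S_n$ of index less than $\binom n3$ contains $A_{[n]\setminus W}$, the alternating group on the complement of a set $W$ with $|W|\le 2$. The threshold $n\ge13$ should be exactly where the classical bounds on indices of primitive and imprimitive maximal subgroups beat $\binom n3$. So every inequality of $Q$ is invariant under $A_{[n]\setminus W}$ for some set $W$ of at most $2$ vertices.

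The second step pushes this invariance onto the slack matrix. I would use the standard symmetrization trick (Yannakakis, Kaibel et al.). Choose section points $y(T)$ for each spanning tree $T$ in an $S_n$-equivariant way, for instance by averaging over the stabilizer of $T$; this is valid since the fibers are convex and symmetric. The slack of inequality $i$ at $(\chi^T,y(T))$ is then a function $f_i(T)$, and $f_i$ is invariant under $A_{[n]\setminus W_i}$. The nonnegative factorization $S=FV$ of the slack matrix of $P_{\ST}(K_n)$ therefore uses columns $f_i$ of this restricted form. The key structural consequence is this. Take two trees $T,T'$ related by an even permutation fixing $W_i$. Each factor takes equal values on $T$ and $T'$, and by nonnegativity the support pattern of the factorization is invariant in the same way.

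The third step, which I expect to be the main obstacle, is to exhibit a contradiction with a subtour face. Take the subtour inequality for a set $S$ with $|S|=3$, say $S=\{a,b,c\}$, whose slack is $2-x(E(S))$. Its tight trees are those containing two edges of the triangle $S$. Write this slack column as $\sum_i \lambda_i f_i$ with $\lambda_i\ge0$. Each $f_i$ appearing with positive coefficient must vanish on all trees tight for $S$. Its support is therefore constrained by a set $W_i$ of at most two vertices. Because the alternating group acts transitively enough on $[n]\setminus W_i$ for $n\ge13$, a $W_i$-invariant function vanishing on all such trees must also vanish on every tree whose intersection pattern with $W_i$ matches some $S$-tight tree. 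Some vertex of $S$ lies outside $W_i$, since $|W_i|\le 2<3$. Moving that vertex should produce a tree on which the $S$-slack is positive but every admissible $f_i$ is zero, which is the desired contradiction. Making this precise means choosing, for each possible $W_i$ with $|W_i\cap S|\le2$, an explicit tree $T$ that has slack $1$ for $S$ and lies in the same $A_{[n]\setminus W_i}$-orbit profile as an $S$-tight tree. I would construct such trees by hand as stars or paths that use one triangle edge plus vertices far from $W_i$; $n\ge 13$ leaves ample room for this.

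Finally, the upper bound is the symmetric Martin/Wong formulation of size $O(n^3)$ recalled in the introduction, which gives $\Theta(n^3)$.
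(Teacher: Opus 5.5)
Your Steps 1 and 2 are sound, and the support-type strategy differs from the paper's. However, Step 3 as written does not close.

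The Farkas identity $2-|T\cap E(S)|=\sum_i\lambda_i f_i(T)$ involves all inequalities with $\lambda_i>0$ at once. These come with different sets $W_i$, which you do not control. Exhibiting, for each $W_i$ separately, a slack-$1$ tree $T_i$ with $f_i(T_i)=0$ gives no contradiction, because the positive slack of $T_i$ can be carried by some other $f_j$. You need a single tree $T^*$ with positive $S$-slack that lies, for \emph{every} $W$ with $|W|\le2$, in the $A([n]\setminus W)$-orbit of some $S$-tight tree.

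That orbit union has a precise description, which should replace your vague ``intersection pattern''. It is exactly the set of trees containing a three-vertex path whose vertex set $R$ satisfies $R\cap W=S\cap W$. Your proposed slack-$1$ witnesses can never meet this requirement for all $W$ simultaneously. Let $S=\{a,b,c\}$ and $T\cap E(S)=\{ab\}$. Then $W=\{a,c\}$ and $W=\{b,c\}$ force $d_T(a,c)=d_T(b,c)=2$. But deleting the edge $ab$ shows $|d_T(a,c)-d_T(b,c)|=1$.

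The step can be repaired with a slack-$2$ witness:
\begin{itemize}
\item Take $v\notin S$ adjacent to $a,b,c$.
\item Add disjoint legs $a a_1 a_2$, $b b_1 b_2$, $c c_1 c_2$.
\item Attach all remaining vertices as leaves of $v$.
\end{itemize}
Then $T^*\cap E(S)=\varnothing$, and for every $|W|\le2$ a suitable three-vertex path exists:
\begin{itemize}
\item if $W\cap S=\varnothing$, one of the three disjoint legs;
\item if $W\cap S=\{a\}$, the path $avb$, or $a a_1 a_2$ when $v\in W$;
\item if $W=\{a,b\}$, the path $avb$.
\end{itemize}
The other cases follow by symmetry. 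This gives $2=\sum_i\lambda_i f_i(T^*)=0$, a contradiction.

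Repaired this way, your route is a zero-pattern argument on the slack factorization. It needs only the small-index theorem, an equivariant section, Farkas, and this single witness tree. The paper instead works on the dual side with the Braun--Pokutta orbit criterion. It builds an explicit signed affine combination of $\Sym(S)\times\Sym(B)$-orbits of six-vertex paths, checks nonnegativity of all $A((S\cup B)\setminus W)$-orbit sums via a finite table, and lifts to $K_n$ by attaching leaves. That certificate framework is more flexible in principle, since it does not require a tree lying in every orbit union. For this polytope, though, your repaired argument is shorter and avoids the case enumeration.
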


Here $\xcs(\cdot)$ denotes the symmetric extension complexity, i.e., the minimum size of a symmetric extended formulation, with respect to the vertex-relabeling action. The above, in particular also include hierarchies, such as e.g., the Sherali-Adams hierarchy \citep{sherali1990hierarchy} or other linear programming based hierarchies that naturally preserve symmetry.

The proof follows the philosophy of Yannakakis' symmetric lower bounds for
matching polytopes \citep{Yannakakis1991} as well as \citet{KaibelPashkovichTheis2011}, however the certificate and lift is different.  We
use the algebraic framework of \citet{BraunPokutta2012} for symmetric extended formulations, which is basically a streamlined, more readily applicable version of \citet{KaibelPashkovichTheis2011}: it reduces the lower bound to constructing a signed
affine combination of vertices that is nonnegative on every orbit forced by the
stabilizers of a small symmetric extension, but violates a valid inequality of the
polytope.

In a nutshell, the construction will work as follows. We construct an explicit certificate on six vertices, by splitting into a triangle $S=\{1,2,3\}$ and an auxiliary triple $B=\{4,5,6\}$, we place weight $+1$ on three $S_3\times S_3$-orbits of spanning trees and weight $-1$ on one orbit.  This base certificate has positive total weight, showing that it has to be contained in the projection of a possible, small extended formulation, but it also has negative slack for the triangle subtour inequality $x(E(S))\le 2$, showing that it strictly projects outside the spanning tree polytope. We then lift that certificate to $n$ vertices by ranging over possible $B$ triples disjoint from $S$ and attaching all remaining vertices as leaves to the six-vertex gadget. The key point is that the orbit-sum nonnegativity established for the local certificate survives this construction via a simple attachment-bijection argument.

\paragraph{Relation to the asymmetric problem.}
\Cref{thm:main} does not resolve the (ordinary) extension complexity of
$P_{\ST}(K_n)$, and we also did not find an easy way to \lq{}asymmetrize\rq{} the gadget.  It is very much possible in fact that the spanning tree polytope admits a smaller asymmetric formulation; this phenomenon is well known and one example is the permutahedron \citep{KaibelPashkovichTheis2011,Goemans2015}, where the extension complexity is $\Theta(n \log n)$ via AKS sorting networks, however the \emph{symmetric} extension complexity is $\Theta(n^2)$ via projection of the Birkhoff polytope. 

As such, the (possibly asymmetric) extension complexity of $P_{\ST}(K_n)$ remains
open between $\Omega(n^2)$ and $O(n^3)$; any improvement
below $n^3$ must however break the natural vertex-relabeling symmetry.

\subsection*{Related work}

We provide a brief overview of extended formulations in general and the spanning tree question in particular.

\paragraph{Extended formulations and formulation complexity.}
The modern theory of extended formulations was initiated by \citet{Yannakakis1991},
who related extension complexity to nonnegative factorizations of slack matrices
and proved the first strong limitations for symmetric formulations.  For general
background on compact extended formulations in combinatorial optimization we refer
to the surveys of \citet{ConfortiCornuejolsZambelli2010} and \citet{Kaibel2011}.
A central theme in the area is that lower bounds and reductions can often be
transported between optimization problems as shown, e.g., in \citet{bfps2012,bfps2012jour,chan2013approximate,chan2016approximate,lee2014power,lee2015lower}. In particular, \citet{BPZ2014,BPZ2014jour} developed affine reductions for LPs and SDPs, and
\citet{BPR2015,BPR2015jour} strengthened these reduction mechanisms for extended
formulations, allowing to reuse many harndess reductions from complexity theory. These reduction frameworks are compatible with symmetry when the
underlying maps respect the group actions.

The exponential lower bound for the traveling-salesman and related
polytopes was obtained by \citet{fmptw2011,fmptw2011jour}, which was subsequently simplified by \citet{kaibel2015short} and made robust to perturbations \citep{braverman2013information,BP2013commInfo,BP2013commInfoJour}.  Later,
\citet{rothvoss2014matching,rothvoss2017matchingjour} proved that the matching polytope also has exponential
extension complexity without any symmetry assumption, while
\citet{BP2014matching,BP2014matchingJour} proved that the matching polytope does
not admit fully-polynomial size relaxation schemes, which was subsequently strengthened by \citet{Sinha2018ApproxMatching}. These results show that, for
some classical polytopes, the ordinary asymmetric extension complexity is already
large. The spanning-tree polytope is different: the best known general lower
bound remains quadratic, while the best known upper bound for $K_n$ is cubic.

\paragraph{Extended formulations for spanning trees.}
Edmonds' description of the spanning tree polytope is a foundational result in
matroid polyhedral theory \citep{Edmonds1970}.  \citet{Martin1991} gave a compact
extended formulation with $O(nm)$ inequalities for an $n$-vertex graph with $m$
edges; for $K_n$ this is $O(n^3)$, a bound also obtainable from the
multicommodity-flow formulation of \citet{Wong1980}.  The general
extension-complexity gap for $K_n$ remains open.  Khoshkhah--Theis showed that
support-only lower-bound methods such as rectangle covers and fooling sets are
stuck near $n^2$ for this polytope \citep{KhoshkhahTheis2018,KhoshkhahTheis2017fooling}.
Smaller formulations are possible for structured graph classes:
\citet{FioriniHuynhJoretPashkovich2017} gave an $O(g^{1/2}n^{3/2}+g^{3/2}n^{1/2})$
formulation for graphs of genus $g$, and
\citet{AprileFioriniHuynhJoretWood2021} showed that for every proper minor-closed
class one obtains size $O(n^{3/2})$, and more generally
$O(n^{1+\beta})$ for graph classes with balanced separators of size $O(n^\beta)$.
For the related independence polytopes of regular matroids (which contain the
graphic case), \citet{AprileFiorini2022} proved a polynomial bound.
These results do not apply to the complete graph $K_n$, but they reinforce that
the cubic upper bound is not an inherent feature of all spanning-tree polytopes.
In a different direction, two \emph{restricted-model} lower bounds are known for
the spanning tree polytope of $K_n$ itself.  \citet{KaibelWalter2015} proved
that its \emph{simple} extension complexity (the smallest number of facets of a
simple non-degenerate polytope projecting onto it) is $2^{n-o(n)}$, and
\citet{Weltge2015} and \citet{KaibelWeltge2013} proved exponential lower bounds in
the model of relaxation complexity without auxiliary variables.  Neither model
restricts to symmetry, and neither implies a lower bound for general extended
formulations with auxiliary variables; our result restricts the model in the
orthogonal direction of symmetry.

\paragraph{Symmetric extended formulations.}
\citet{Yannakakis1991} proved that symmetric extended formulations for matching
polytopes are large.  \citet{KaibelPashkovichTheis2011} sharpened and extended
the method, showing that symmetry can create superpolynomial gaps between
symmetric and asymmetric formulation sizes; the detailed development is given in
the thesis of \citet{Pashkovich2012}.  Notably, in all of this line of work the
spanning tree polytope appears only as the guiding \emph{example of a polytope
that admits a small symmetric formulation} (Martin's); the matching symmetric
lower bound we prove here was not established.  \citet{BraunPokutta2012} later gave
an algebraic approach that packages the relevant orbit-sum criterion in a
convenient form.  We use that formulation
below.  The required small-index group-theoretic input is standard; we cite it in
the form used by \citet{BraunPokutta2012} and by the symmetric-SDP work of
\citet{BraunEtAl2017SymmetricSDP}, which derives it from Theorems 5.2A and 5.2B
of \citet{DixonMortimer1996}.

\section{Preliminaries}

We will now briefly recall necessary notions and notation.

\subsection{The spanning tree polytope}

Let $K_n=([n],E_n)$ be the complete graph.  For $S\subseteq[n]$, we write
$E(S)$ for the set of edges with both endpoints in $S$.  Edmonds' description of
the spanning tree polytope is given by
\begin{align}
  P_{\ST}(K_n)=\{x\in\R_+^{E_n}:\ &x(E_n)=n-1,\nonumber\\
  &x(E(S))\le |S|-1\quad\text{for all }S\subseteq[n],\ 2\le |S|\le n-1\}.
\end{align}
For a triangle $S$ with $|S|=3$, the subtour inequality becomes
\(x(E(S))\le |S|-1=2\) and is called a \emph{triangle subtour inequality}. Given a spanning tree $T$, its
slack (relative to $S$) is given by
\[
  h_S(T) = 2-|T\cap E(S)|,
\]
and possible values of $h_S(T)$ are $0,1,2$ depending on whether the induced forest on
$S$ has two, one, or zero edges.

\subsection{Symmetric extended formulations}

We recall the standard terminology for symmetric extensions, following
\citet{Yannakakis1991}, \citet{KaibelPashkovichTheis2011}, and
\citet{BraunPokutta2012}.

Let a finite group $G$ act affinely on $\R^m$.  A polytope $P\subseteq\R^m$ is a
\emph{$G$-polytope} if
\[
  gP=P\qquad\text{for every }g\in G.
\]
Equivalently, the action permutes the vertices and the face lattice of $P$.
In this paper $G$ will be the full symmetric group $S_n$, acting on $K_n$ by
relabeling vertices.  We will also use alternating subgroups of the form
$A([n]\setminus W)$ inside stabilizers.  The vertex-relabeling action naturally induces
actions on edges, spanning trees, and the spanning tree polytope. 

An \emph{extension} of a polytope $P\subseteq\R^m$ is a polyhedron
$Q\subseteq\R^d$ together with an affine projection $p:\R^d\to\R^m$ such that
$p(Q)=P$.  Its size is the number of facet-defining inequalities of $Q$; equations
are not counted, see e.g., \citet{BPZ2014,BPR2015,BPZ2014jour,BPR2015jour} for some background on this complexity model.  If $P$ is a $G$-polytope, a \emph{$G$-symmetric extension} is an extension $p:Q\to P$ together with an affine action of $G$ on $Q$ such that
\[
  p(gq)=g\,p(q)
  \qquad\text{for every }g\in G\text{ and }q\in Q.
\]
Note that the usual coordinate-symmetric formulation, in which $G$ permutes the auxiliary
variables or facets, is equivalent for lower-bound purposes to this invariant
formulation after passing to the slack/subspace representation; see
\citet{KaibelPashkovichTheis2011,BraunPokutta2012}.

For a $G$-polytope $P$, define its \emph{symmetric extension complexity ($\xcs(\cdot)$)} via
\[
  \xcs_G(P) \doteq \min\{\text{number of inequalities in a }G\text{-symmetric extension of }P\},
\]
and for the spanning tree polytope we abbreviate
\[
  \xcs(P_{\ST}(K_n)) \doteq \xcs_{S_n}(P_{\ST}(K_n)).
\]

For a finite set $Y$, write $\Sym(Y)$ for the full symmetric group on $Y$.
Thus, if $|Y|=3$, then $\Sym(Y)$ is naturally isomorphic to the usual group $S_3$.
For $W\subseteq[n]$, the notation $A([n]\setminus W)$ denotes the \emph{alternating
group on the complement of $W$}, viewed as the subgroup of $A_n$ that fixes every
vertex of $W$ and acts by even permutations on $[n]\setminus W$. We will establish our lower bound for $S_n$-symmetric extensions and the alternating groups
$A([n]\setminus W)$ will appear naturally through the small-index theorem for stabilizers of facets in a small $S_n$-symmetric formulation as we will see below. To this end, for a group action on a set $X$, we write for the \emph{orbit} and \emph{stabilizer}, respectively:
\[
  \operatorname{Orb}_G(x)=\{gx:g\in G\},
  \qquad
  \operatorname{Stab}_G(x)=\{g\in G:gx=x\}.
\]
We will also use the well-known \emph{orbit-stabilizer identity}
\[
  |\operatorname{Orb}_G(x)|=[G:\operatorname{Stab}_G(x)].
\]

Finally, we recall the upper bound result of \citet{Martin1991}. 

\begin{proposition}[Symmetric extension upper bound]\label{prop:symmetric-upper}
The spanning tree polytope $P_{\ST}(K_n)$ has an $S_n$-symmetric extended
formulation of size $O(n^3)$.
\end{proposition}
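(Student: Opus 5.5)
We will prove the statement by writing down Martin's formulation (equivalently, the directed multicommodity-flow formulation of Wong) explicitly for $K_n$ and checking two things: its size is $O(n^3)$, and the natural action of $S_n$ on its variables satisfies the equivariance condition $p(gq)=g\,p(q)$.

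\textbf{The formulation.} Martin's formulation is the separation LP for the subtour inequalities, dualized. For every ordered pair of distinct vertices $(k,i)$ and every vertex $j\notin\{i,k\}$ we introduce a variable $z_{k,i,j}\ge 0$. We then impose
\[
  x(E_n)=n-1,\qquad x_e\ge 0,
\]
\[
  x_{ij}=z_{k,i,j}+z_{k,j,i}\qquad\text{for all }k\text{ and all edges } \{i,j\}\not\ni k,
\]
\[
  x_{ik}+\sum_{j\notin\{i,k\}} z_{k,i,j}=1\qquad\text{for all }i\neq k.
\]
Intuitively, $z_{k,i,j}$ orients the edge $\{i,j\}$ from $i$ toward $j$ in the tree rooted at $k$. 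Each vertex $i\ne k$ has exactly one outgoing arc, which is either the edge to $k$ or some oriented edge $i\to j$.

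\textbf{Validity and size.} Correctness is Martin's theorem \citep{Martin1991}, which we cite. For the reader's orientation, the two inclusions run as follows. Given a spanning tree, root it at $k$ and orient each edge toward the root; this produces a feasible $z$. Conversely, a feasible $(x,z)$ satisfies $x(E(S))\le|S|-1$: pick $k\in S$ and sum the out-degree equations over $i\in S\setminus\{k\}$. Each edge of $E(S)$ is then covered with total weight $x_{ij}$, and the right-hand side equals $|S|-1$. The inequalities are $z\ge0$ and $x\ge0$, so there are $n(n-1)(n-2)+\binom n2=O(n^3)$ of them; the equations are not counted.

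\textbf{Symmetry.} Let $\sigma\in S_n$ act on the variables by $x_{ij}\mapsto x_{\sigma(i)\sigma(j)}$ and $z_{k,i,j}\mapsto z_{\sigma(k),\sigma(i),\sigma(j)}$. This is a coordinate permutation, hence affine. Every family of constraints is indexed by tuples of vertices and is mapped onto itself by $\sigma$, so the action preserves $Q$. The projection $p$ simply forgets $z$, so $p(\sigma q)=\sigma\,p(q)$ holds trivially. Hence the extension is $S_n$-symmetric.

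\textbf{Main obstacle.} The only substantive step is the equality $p(Q)=P_{\ST}(K_n)$, and this is exactly Martin's result. We rely on the citation rather than reproving it; the symmetry and the counting of inequalities are routine.
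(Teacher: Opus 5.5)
Your proposal is correct and takes the same route as the paper. The paper gives no separate argument: it invokes Martin's $O(n^3)$ formulation and cites Kaibel--Pashkovich--Theis for the fact that this formulation is invariant under vertex relabeling. You make explicit the formulation, the inequality count $n(n-1)(n-2)+\binom n2$, and the coordinate-permutation action, and your ``orientation'' sketch is in fact a complete proof of $p(Q)=P_{\ST}(K_n)$ given Edmonds' description, since the dropped terms $z_{k,i,j}$ with $j\notin S$ are nonnegative.
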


\subsection{The Orbit criterion}

We use the orbit criterion of \citet{BraunPokutta2012} as a black-box algebraic form of the symmetric-extension framework from \citet{Yannakakis1991,KaibelPashkovichTheis2011}.

\begin{theorem}[Orbit criterion; \citep{BraunPokutta2012}]\label{thm:bp-general}
Let a finite group $G$ act affinely on a polytope $P=\conv(X)$.  Let $Q$ be a
$G$-symmetric extension of $P$.  For every facet $j$ of $Q$, let $G_j\le G$ be
its stabilizer.  The $G_j$-orbit partition of $X$ is the partition whose parts are
sets of the form $\operatorname{Orb}_{G_j}(x)$.  Let $\mathcal F_j$ be any
refinement of this orbit partition.  If $c=(c_x)_{x\in X}\in\R^X$ satisfies
\[
  \sum_{x\in X}c_x=1
\]
and
\[
  \sum_{x\in F}c_x\ge0
  \qquad\text{for every }F\in\mathcal F_j\text{ and every facet }j,
\]
then
\[
  \sum_{x\in X}c_x x\in P.
\]
\end{theorem}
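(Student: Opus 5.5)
The plan is to build one explicit point of $Q$ that projects to $\sum_x c_x x$. To do this I will lift the vertices of $P$ into $Q$ compatibly with the group action, and then check each facet inequality of $Q$ on the corresponding affine combination of lifts. Throughout, write $Q=\{y\in\R^d: Ay=b,\ a_j^\top y\le b_j \text{ for every facet } j\}$. I assume, as is implicit in the notion of a $G$-symmetric extension, that the affine action of $G$ satisfies $gQ=Q$ for every $g\in G$.

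\textbf{Step 1: equivariant lifts.} Fix a representative $x_0$ in each $G$-orbit of $X$, and pick any $q\in Q$ with $p(q)=x_0$. Average over the stabilizer:
\[
q_{x_0}=\frac1{|\operatorname{Stab}_G(x_0)|}\sum_{h\in\operatorname{Stab}_G(x_0)}hq .
\]
This point lies in $Q$, since $Q$ is convex and $G$-invariant. It projects to $x_0$, since $p$ is affine and $p(hq)=hx_0=x_0$ for each $h$ in the stabilizer. Now set $q_{gx_0}=g\,q_{x_0}$. This is well defined because $q_{x_0}$ is $\operatorname{Stab}_G(x_0)$-invariant. The resulting family satisfies $q_{gx}=g\,q_x$ and $p(q_x)=x$ for all $x\in X$ and $g\in G$.

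\textbf{Step 2: slacks are constant on $G_j$-orbits.} For a facet $j$, let $s_j(y)=b_j-a_j^\top y$. Take $g\in G_j$. The affine bijection $g$ preserves $Q$ and maps facet $j$ to itself. Hence $s_j\circ g^{-1}$ is an affine function that is nonnegative on $Q$ and vanishes exactly on facet $j$. Modulo the equations $Ay=b$, it therefore equals $\lambda_g s_j$ on the affine hull of $Q$ for some $\lambda_g>0$. The map $g\mapsto\lambda_g$ is a homomorphism from the finite group $G_j$ into $\R_{>0}$, so it is trivial. Thus $s_j(gy)=s_j(y)$ on $\operatorname{aff}Q$. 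Combining this with Step 1 gives $s_j(q_{gx})=s_j(q_x)$ for all $g\in G_j$. So the nonnegative numbers $\sigma_j(x)\doteq s_j(q_x)$ are constant on $G_j$-orbits, and hence constant on every part of any refinement $\mathcal F_j$.

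\textbf{Step 3: conclude.} Put $q^\ast=\sum_x c_x q_x$. Because $\sum_x c_x=1$, this is an affine combination, so $Aq^\ast=b$. For each facet $j$, let $\sigma_j(F)$ denote the common value of $\sigma_j$ on a part $F\in\mathcal F_j$. Then
\[
s_j(q^\ast)=\sum_x c_x\,\sigma_j(x)=\sum_{F\in\mathcal F_j}\sigma_j(F)\sum_{x\in F}c_x\ \ge 0 .
\]
Hence $q^\ast\in Q$, and $p(q^\ast)=\sum_x c_x\,p(q_x)=\sum_x c_x x\in p(Q)=P$.

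The main obstacle is Step 2: showing that the slack function of a facet is genuinely invariant under the facet's stabilizer, rather than invariant only up to a positive scalar or only on $Q$ itself. The finite-order argument $\lambda_g^{|G_j|}=1$ settles the scalar. Working modulo the equality constraints handles the possible non-full-dimensionality of $Q$.
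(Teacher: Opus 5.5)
Your proof is correct and complete under the paper's definition of a $G$-symmetric extension. There is no in-paper proof to compare against: the paper imports this statement as a black box from \citet{BraunPokutta2012}. Your three steps together give a self-contained proof:
\begin{itemize}
\item an equivariant section $x\mapsto q_x$, built by averaging over stabilizers;
\item $G_j$-invariance of each facet slack, via the argument that a homomorphism from a finite group into $\R_{>0}$ is trivial;
\item evaluation of the slacks at the affine combination $q^\ast=\sum_x c_x q_x$.
\end{itemize}

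In the slack-matrix language of \citet{Yannakakis1991}, your numbers $\sigma_j(x)=s_j(q_x)$ form the right factor of a nonnegative factorization of the slack matrix of $P$ whose rows are $G_j$-invariant. There one would conclude by checking each valid inequality of $P$ separately. Your geometric version avoids the Farkas step needed to produce the left factor, and it exhibits an explicit preimage $q^\ast\in Q$ of $\sum_x c_x x$.
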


The group-theoretic property needed to specialize \Cref{thm:bp-general} is the
following small-index theorem, which follows directly from \citep[Theorems~5.2A and~5.2B]{DixonMortimer1996}; see also \citep{KaibelPashkovichTheis2011}.

\begin{theorem}[Small-index theorem]\label{thm:small-index-k}
Let $1\le k<n/4$, let $n\ge10$, and let $G\le S_n$.  If
\[
  [S_n:G]<\binom nk,
\]
then there is a subset $W\subseteq[n]$ with $|W|<k$ such that
\[
  A([n]\setminus W)\le G.
\]
\end{theorem}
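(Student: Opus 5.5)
The plan is to reduce to the alternating group and then quote \citep[Theorem~5.2A]{DixonMortimer1996} directly. The reduction is needed because $A([n]\setminus W)$ is, by definition, the pointwise stabilizer of $W$ inside $A_n$, which is exactly the object appearing in the Dixon--Mortimer statement for subgroups of $A_n$.

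Given $G\le S_n$ with $[S_n:G]<\binom nk$, I will set $H\doteq G\cap A_n$. Since $[G:H]\le 2$, we have $|H|\ge |G|/2$. Hence
\[
  [A_n:H]=\frac{|A_n|}{|H|}\le\frac{|S_n|/2}{|G|/2}=[S_n:G]<\binom nk .
\]
So $H$ is a subgroup of $A_n$ whose index is below the same binomial threshold.

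Next I will check the hypotheses of Theorem~5.2A, which applies to $\operatorname{Alt}(\Omega)$ with $|\Omega|=n\ge5$ and a parameter $r\le n/2$. Here $n\ge10\ge5$, and I take $r=k$, which is allowed since $1\le k<n/4\le n/2$. The theorem then yields $\Delta\subseteq[n]$ with $|\Delta|<k$ such that the pointwise stabilizer $(A_n)_{(\Delta)}$ is contained in $H$. It also places $H$ inside the setwise stabilizer of $\Delta$, but we do not need that part. With $W\doteq\Delta$, the group $(A_n)_{(W)}$ consists of the even permutations fixing every point of $W$, which is precisely $A([n]\setminus W)$. Therefore
\[
  A([n]\setminus W)\le H\le G .
\]
Alternatively, Theorem~5.2B, the $\operatorname{Sym}(\Omega)$ version, applies directly to $G$. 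It gives $\operatorname{Sym}(\Omega)_{(\Delta)}\cap\ldots$, or in some formulations $\operatorname{Alt}(\Omega)_{(\Delta)}\le G$, and this already contains $A([n]\setminus W)$. I will use whichever form matches the cited statement.

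The main obstacle I expect is bookkeeping with the precise hypotheses of Dixon--Mortimer. Their statements are phrased with conditions such as $r\le n/2$, sometimes with small exceptional cases, for instance $n$ small or $r$ close to $n/2$. These are why the statement here imposes the stronger margins $k<n/4$ and $n\ge10$. I will verify that these margins place us strictly inside the non-exceptional range.

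As a sanity check on the edge case $k=1$: the conclusion forces $W=\emptyset$, i.e.\ $A_n\le G$. This is the classical fact that for $n\ge5$ every subgroup of $S_n$ of index less than $n$ is $A_n$ or $S_n$, which is consistent with the theorem.
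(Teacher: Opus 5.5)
Your route is the paper's route. The paper gives no argument beyond deriving the statement from Dixon--Mortimer, Theorems~5.2A and~5.2B. Your reduction to $H=G\cap A_n$ with $[A_n:H]\le[S_n:G]$ is correct, though unnecessary if you apply 5.2B to $G$ directly.

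The gap is in how you quote the cited result. For $r\le n/2$, Theorem~5.2A does not simply yield $\Delta$ with $|\Delta|<r$ and $(A_n)_{(\Delta)}\le H$; its conclusion is a trichotomy, and 5.2B has the same structure. The alternatives are:
\begin{itemize}
\item[(i)] the pointwise-stabilizer containment you want;
\item[(ii)] $n=2m$ is even, $H$ is imprimitive with two blocks of size $m$, and $[A_n:H]=\frac12\binom nm$;
\item[(iii)] a short list of primitive exceptions, all with $n\le 9$.
\end{itemize}
You correctly suspect that exceptions exist, but ruling them out is the only real content in deriving the stated theorem from the citation. Your proposal defers it (``I will verify\ldots'') rather than doing it.

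The fix is short. Alternative~(iii) is excluded by $n\ge10$. For~(ii), use $k<n/4$ and $n\ge10$ to get
\[
\frac{\binom{n}{k+1}}{\binom nk}=\frac{n-k}{k+1}>\frac{3n/4}{n/4+1}=\frac{3n}{n+4}>2 .
\]
Since also $k+1<n/4+1\le n/2$, this gives $\binom nk<\frac12\binom n{k+1}\le\frac12\binom n{n/2}$. So an index equal to $\frac12\binom n{n/2}$ can never be below $\binom nk$. This is the same observation the paper makes in its remark after the theorem, where it notes that the bisection stabilizer has index at least $\binom n3$. With these two exclusions written out, your argument is complete.
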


The range $k<n/4$ is a conservative form of the standard small-index theorem
and is sufficient for our purposes.  It is not sharp for the case used here:
for $k=3$ the conclusion already holds for $n\ge10$.  The only obstruction to
a naive extension as far as $k\le n/2$ is, up to conjugacy, the imprimitive
subgroup preserving a bipartition of $[n]$ into two equal blocks, present only
for even $n$; its index is $\frac12\binom{n}{n/2}$, which is at least
$\binom n3$ for even $n\ge10$.  Thus this exception cannot occur under our
strict index hypothesis $[S_n:G]<\binom n3$.  We keep $n\ge13$ throughout
only because it is the clean threshold obtained from the quoted $k<n/4$
version with $k=3$.

\begin{corollary}[Small-index orbit criterion]\label{cor:bp-k}
Let $1\le k<n/4$, and let $P=\conv(X)$ be an $S_n$-polytope.  Suppose $P$ has an
$S_n$-symmetric extended formulation with fewer than $\binom nk$ facets.  If
$c=(c_x)_{x\in X}\in\R^X$ satisfies
\[
  \sum_{x\in X}c_x=1
\]
and, for every $W\subseteq[n]$ with $|W|<k$ and every $A([n]\setminus W)$-orbit
$\mathcal O$ on $X$,
\[
  \sum_{x\in\mathcal O}c_x\ge0,
\]
then
\[
  \sum_{x\in X}c_x x\in P.
\]
\end{corollary}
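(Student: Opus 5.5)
The corollary follows by combining \Cref{thm:bp-general} with \Cref{thm:small-index-k}. Fix an $S_n$-symmetric extension $Q$ of $P$ with fewer than $\binom nk$ facets. The plan is to show that for every facet $j$ of $Q$, the $G_j$-orbit partition of $X$ admits a refinement $\mathcal F_j$ made of $A([n]\setminus W)$-orbits for some $W$ with $|W|<k$. The hypothesis on $c$ then gives exactly the nonnegativity condition required by \Cref{thm:bp-general}, and together with $\sum_x c_x=1$ this yields $\sum_x c_x x\in P$.

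\emph{Step 1 (index bound).} The affine action of $S_n$ on $Q$ maps $Q$ to itself, so it maps faces to faces of the same dimension and hence permutes the set of facets of $Q$. Applying the orbit--stabilizer identity to the facet $j$ gives
\[
  [S_n:G_j]=|\operatorname{Orb}_{S_n}(j)|\le \#\{\text{facets of }Q\}<\binom nk .
\]
One point needs care: the size of $Q$ counts facet-defining inequalities. If $Q$ is not full-dimensional, I will work with facets as faces, since those are what the action permutes; equations are not counted and play no role here.

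\emph{Step 2 (small index).} The assumption $1\le k<n/4$ forces $n>4k\ge4$, but \Cref{thm:small-index-k} also requires $n\ge10$. I expect this to be the only real obstacle, and it is a bookkeeping one. In the application $k=3$ and $n\ge13$, so $n\ge10$ holds automatically. For general $k$, I would either add the condition $n\ge10$ to the hypotheses as implicitly inherited from the theorem, or treat the small cases directly. For $k\le2$ and $n<10$ the index bound $[S_n:G_j]<\binom n2$ still yields $A([n]\setminus W)\le G_j$ with $|W|<k$ by classical facts, though I would simply restrict to $n\ge10$. Under this condition \Cref{thm:small-index-k} gives a set $W_j\subseteq[n]$ with $|W_j|<k$ and $H_j:=A([n]\setminus W_j)\le G_j$.

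\emph{Step 3 (refinement and conclusion).} Since $H_j\le G_j$, every $H_j$-orbit on $X$ lies inside a single $G_j$-orbit, and each $G_j$-orbit is the disjoint union of the $H_j$-orbits it contains. So the $H_j$-orbit partition $\mathcal F_j$ is a refinement of the $G_j$-orbit partition. By hypothesis (with $W=W_j$), $\sum_{x\in F}c_x\ge0$ for every $F\in\mathcal F_j$ and every facet $j$. \Cref{thm:bp-general} therefore applies and gives $\sum_x c_x x\in P$.
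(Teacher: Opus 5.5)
Your proof is correct and follows the paper's argument essentially step for step: orbit--stabilizer bounds $[S_n:G_j]$ by the number of facets, the small-index theorem gives $A([n]\setminus W_j)\le G_j$ with $|W_j|<k$, and the resulting refinement of the $G_j$-orbit partition feeds directly into \Cref{thm:bp-general}. Your $n\ge10$ caveat is a fair catch that the paper's proof passes over silently. It only concerns $k=1$ with $5\le n\le9$ and $k=2$ with $n=9$, which play no role in the application ($k=3$, $n\ge13$) and can be settled elementarily (e.g., for $n\ge5$ every subgroup of $S_n$ of index less than $n$ contains $A_n$).
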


\begin{proof}
Let $j$ be a facet of a purported symmetric extension.  Its orbit under $S_n$ has
cardinality $[S_n:G_j]$, where $G_j$ is the facet's stabilizer, and this orbit has size
less than $\binom nk$ because the extension has fewer than $\binom nk$ facets.
By \Cref{thm:small-index-k}, $G_j$ contains $A([n]\setminus W_j)$ for some
$|W_j|<k$.  Therefore every $G_j$-orbit on $X$ is a union of
$A([n]\setminus W_j)$-orbits.  Nonnegativity on all such smaller orbits implies
nonnegativity on the $G_j$-orbits.  Applying \Cref{thm:bp-general} gives
the claim.
\end{proof}

In particular, taking $k=3$, if $n\ge13$ and $P$ has an $S_n$-symmetric extended
formulation with fewer than $\binom n3$ facets, then it suffices to check
nonnegativity on the orbits of the subgroups $A([n]\setminus W)$ with $|W|\le2$.
Thus the proof of \Cref{thm:main} reduces to constructing a vector of signed weights on spanning trees with positive total weight (after normalizing this is effectively an affine combination), nonnegative sums on all
$A([n]\setminus W)$-orbits for $|W|\le2$, and negative slack for one valid
inequality. We refer to such weights as \emph{signed certificate}.

\section{Building the affine combination}

The construction proceeds in two steps.  First we build a local signed weight vector on the spanning trees of $K_6$.  This six-vertex object is only a certificate gadget: it is not meant to prove the lower bound for $K_6$ itself.  Second, for arbitrary $n$ we lift this local certificate to $K_n$ by choosing an auxiliary triple and attaching all remaining vertices as leaves; \Cref{fig:certificate-and-lift} shows both the base certificate and this lift.  The condition $n\ge13$ enters only when we invoke the small-index theorem in \Cref{cor:bp-k} with $k=3$. 

\begin{figure}[t]
  \centering
  \begin{minipage}[c]{0.66\textwidth}
    \centering
    \includegraphics[width=\linewidth]{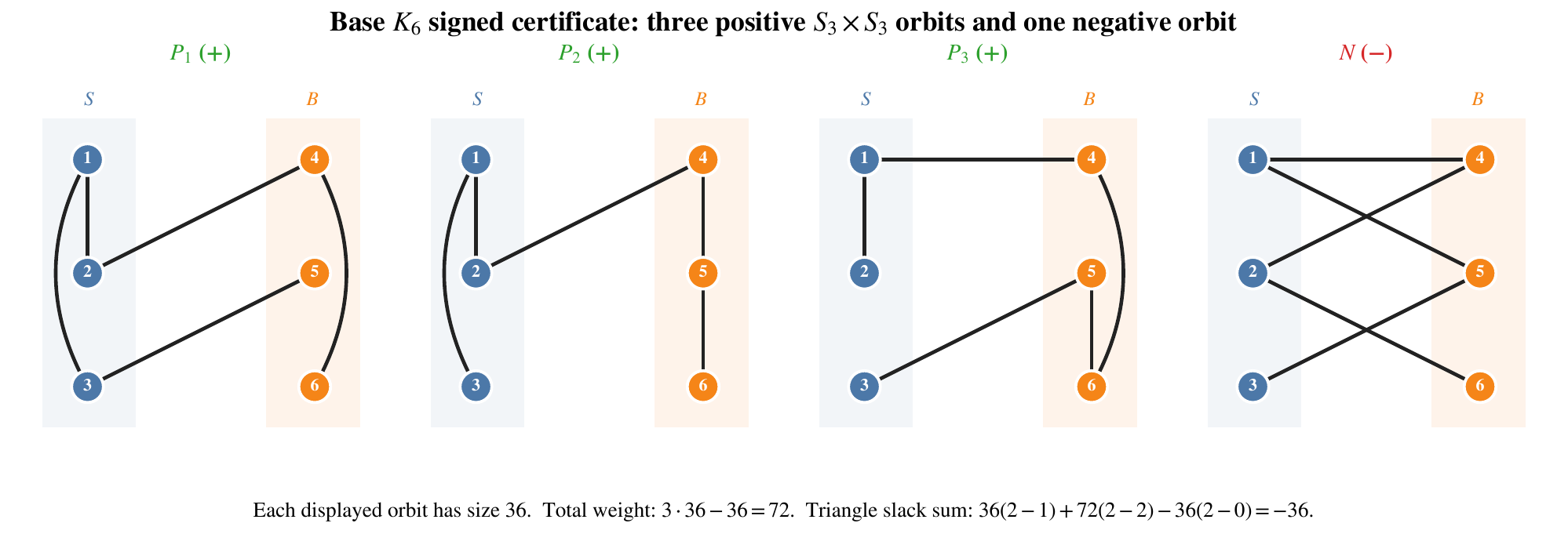}
  \end{minipage}\hfill
  \begin{minipage}[c]{0.30\textwidth}
    \centering
    \includegraphics[width=\linewidth]{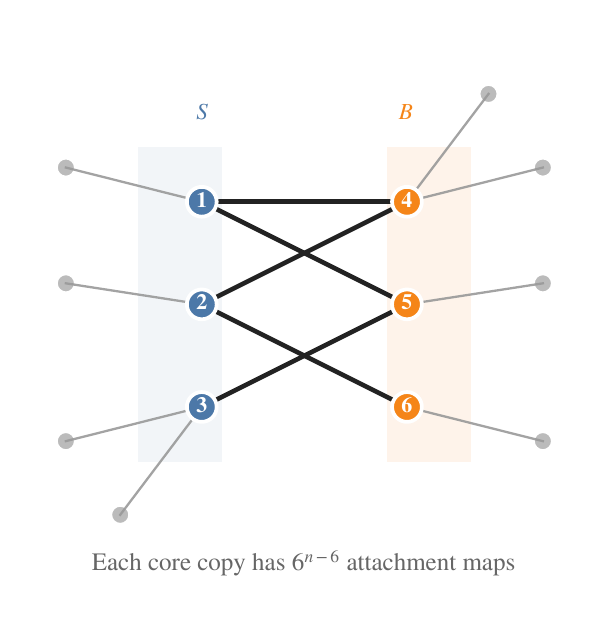}
  \end{minipage}
  \caption{Left: the base $K_6$ signed certificate.  The vertices are split into
  the triangle $S=\{1,2,3\}$ and the auxiliary triple $B=\{4,5,6\}$.  We put
  weight $+1$ on the $S_3\times S_3$-orbits of $P_1,P_2,P_3$ and weight $-1$ on
  the orbit of $N$.  Each orbit has size $36$.  Right: the lift to $n$ vertices,
  obtained by choosing $B$ and attaching every remaining vertex as a leaf to the
  six-vertex core.}
  \label{fig:certificate-and-lift}
\end{figure}

\subsection{Affine combination on six vertices}

Fix two triples
\[
  S=\{1,2,3\},\qquad B=\{4,5,6\}.
\]
The two triples play symmetric roles in the base certificate.  We distinguish
$S$ only because the eventual violated inequality is the triangle subtour
inequality on $S$; in the lift, $B$ is the second triple chosen from the remaining
vertices.  Let $\Gamma=\Sym(S)\times\Sym(B)$ be the group that permutes $S$ and
$B$ separately.
Consider the four spanning trees on $S\cup B$:
\[
\begin{aligned}
P_1&=\{12,13,24,35,46\},\\
P_2&=\{12,13,24,45,56\},\\
P_3&=\{12,14,35,46,56\},\\
N&=\{14,15,24,26,35\}.
\end{aligned}
\]
All four graphs are paths on the six vertices, with the two triples
$S$ and $B$ interleaved along the path; see \cref{fig:certificate-and-lift}.  They
belong to different $\Gamma$-orbits: simply consider the
invariant pair
\[
  \bigl(|T\cap E(S)|,\ |T\cap \delta(S)|\bigr),
\]
where $\delta(S)$ is the set of edges with exactly one endpoint in $S$: these
pairs are $(2,2)$, $(2,1)$, $(1,2)$, and $(0,5)$ for $P_1,P_2,P_3$, and $N$,
respectively.
We put weight $+1$ on every tree in the orbits $\Gamma P_1$, $\Gamma P_2$, and
$\Gamma P_3$, weight $-1$ on every tree in $\Gamma N$, and weight zero on all
other six-vertex trees.  Denote the resulting signed weights by $\lambda^{(6)}$.

\begin{lemma}[Base total and slack]\label{lem:base-total-slack}
The base certificate satisfies
\[
  \sum_T \lambda_T^{(6)}=72
\]
and
\[
  \sum_T\lambda_T^{(6)}\bigl(2-|T\cap E(S)|\bigr)=-36.
\]
\end{lemma}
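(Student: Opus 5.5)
The plan is a direct orbit count: both sums are constant on each $\Gamma$-orbit, so each reduces to orbit sizes times per-orbit values.

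\textbf{Step 1: orbit sizes.} By the orbit-stabilizer identity, $|\Gamma T|=36/|\operatorname{Stab}_\Gamma(T)|$ since $|\Gamma|=36$. For each of $P_1,P_2,P_3,N$ I will show the stabilizer is trivial. Each tree is a path, so any automorphism preserving it either fixes it pointwise or reverses it. Since $\Gamma$ preserves the partition $\{S,B\}$, a reversal is only possible if the $S/B$ pattern along the path is a palindrome.
\begin{itemize}
\item $P_1$ is $4{-}2{-}1{-}3{-}5$ with $6$ attached to $4$, i.e.\ the path $6,4,2,1,3,5$ with pattern $BBSSSB$. This is not a palindrome.
\item $P_2$ is the path $6,5,4,2,1,3$ with pattern $BBBSSS$. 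Reversal would swap $S$ and $B$, which $\Gamma$ forbids.
\item $P_3$ is the path $2,1,4,6,5,3$ with pattern $SSBBBS$. This is not a palindrome.
\item $N$ is the path $6,2,4,1,5,3$ with pattern $BSBSBS$. Reversal again swaps the roles of $S$ and $B$.
\end{itemize}
In every case the stabilizer is trivial, so each orbit has size $36$, as \Cref{fig:certificate-and-lift} asserts.

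\textbf{Step 2: disjointness.} The four orbits are pairwise disjoint by the invariant pair $\bigl(|T\cap E(S)|,|T\cap\delta(S)|\bigr)$ computed just before the lemma. So $\lambda^{(6)}$ is well defined, and the total weight is
\[
\sum_T\lambda^{(6)}_T=3\cdot 36-36=72.
\]

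\textbf{Step 3: slack.} The slack $h_S(T)=2-|T\cap E(S)|$ is $\Gamma$-invariant, because $\Gamma$ fixes $S$ setwise. From the invariants, $P_1$ and $P_2$ have slack $0$, $P_3$ has slack $1$, and $N$ has slack $2$. The weighted slack is therefore
\[
36\,(0+0+1)-36\cdot 2=-36.
\]

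The only step needing care is the trivial-stabilizer check in Step 1, which requires reading the edge lists correctly. Mis-identifying one path structure would change an orbit size and break both totals, so I would verify each edge list explicitly.
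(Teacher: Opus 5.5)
Your proof is correct and follows the paper's route. The paper also observes that $\Gamma$ acts freely on the four paths, so the four disjoint orbits have size $36$, and it gets $3\cdot 36-36=72$ and $36\cdot 1-36\cdot 2=-36$ the same way; your argument via path automorphisms and $S/B$ patterns spells out the paper's one-line claim that the stabilizers are trivial, where the fact doing the work is that $\Gamma$ fixes $S$ and $B$ individually, not merely the partition $\{S,B\}$.
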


\begin{proof}
The action of $\Gamma=\Sym(S)\times\Sym(B)$ on each of the four representatives
$P_1,P_2,P_3,N$ is free: no nonidentity permutation preserving the two triples
fixes any one of these labeled paths.  Hence each orbit has size
$|\Gamma|=|\Sym(S)\times\Sym(B)|=(3!)^2=36$.  Thus the total signed weight is
$3\cdot36-36=72$.  The representatives $P_1$ and $P_2$ have two edges inside
$S$, the representative $P_3$ has one edge inside $S$, and $N$ has no edge inside
$S$.  Therefore the signed sum of the triangle-slack function
$h_S(T)=2-|T\cap E(S)|$ over the four weighted orbits is
\[
  72(2-2)+36(2-1)-36(2-0)=-36.
\]
\end{proof}

Recall that the slack function is affine-linear in the characteristic vector $\chi$ of a given tree $T$, i.e., $2-|T\cap E(S)|=2-\langle \chi^T,\mathbf 1_{E(S)}\rangle$.  Therefore, for any signed affine combination $y=\sum_T c_T\chi^T$ with $\sum_Tc_T=1$, the same
signed combination of triangle slacks equals $2-y(E(S))$, which is why a negative
signed slack sum later certifies violation of the triangle subtour inequality.

\begin{lemma}[Base orbit sums]\label{lem:base-orbit-sums}
For every $W\subseteq S\cup B$ with $|W|\le2$ and every orbit $\mathcal O$ of
$A((S\cup B)\setminus W)$ on six-vertex trees,
\[
  \sum_{T\in\mathcal O}\lambda_T^{(6)}\ge0.
\]
\end{lemma}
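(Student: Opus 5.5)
The plan is to reduce every orbit condition to a check on \emph{positions along the path}, using two facts: all four weighted trees are Hamiltonian paths of $K_6$, and $\lambda^{(6)}$ is $\Gamma$-invariant. Let $U=(S\cup B)\setminus W$. First I would pass from $A(U)$-orbits to $\Sym(U)$-orbits. For $|W|\le2$ there is always a transposition $\tau\in\Gamma$ that fixes $W$ pointwise. If $W\subseteq S$, take a transposition inside $B$, and symmetrically if $W\subseteq B$. If $W=\{s,b\}$ with $s\in S$ and $b\in B$, swap the two elements of $S\setminus\{s\}$. Such a $\tau$ is odd and lies in $\Sym(U)$, so it normalizes $A(U)$ and maps each $A(U)$-orbit either to itself or to the other half of the same $\Sym(U)$-orbit. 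Since $\lambda^{(6)}\circ\tau=\lambda^{(6)}$, both halves carry the same weight. Hence it suffices to show that every $\Sym(U)$-orbit has nonnegative weight. Only orbits consisting of paths matter, since $\lambda^{(6)}$ vanishes elsewhere.

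Next I would describe the $\Sym(U)$-orbits on labeled paths. Such an orbit is determined by the positions that the vertices of $W$ occupy along the path, taken up to reversal. For $W=\emptyset$ there is a single orbit containing all paths, and its weight is the total $72\ge0$ from \Cref{lem:base-total-slack}. For $W=\{w\}$ there are three orbits, according to whether $w$ is an end vertex, the second vertex, or the third vertex from the nearer end.

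The weight of each such orbit is computed by averaging over $\Gamma$. Each $\Gamma$-orbit has size $36$ with free action. Therefore the number of trees in $\Gamma P$ that have $w$ at position class $p$ equals $12$ times the number of vertices of the triple containing $w$ that sit at class $p$ in $P$. Reading off the paths gives
\begin{align*}
P_1&: 6\,4\,2\,1\,3\,5, & P_2&: 3\,1\,2\,4\,5\,6,\\
P_3&: 2\,1\,4\,6\,5\,3, & N&: 6\,2\,4\,1\,5\,3.
\end{align*}
For $w\in S$ the signed counts at the end, second and third positions are $0+1+2-1=2$, $1+1+1-1=2$ and $2+1+0-1=2$. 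For $w\in B$ they are likewise $2,2,2$. So every orbit weight for $|W|=1$ equals $24>0$.

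The main obstacle is the case $|W|=2$. Here orbits are indexed by the unordered pair of positions occupied by $w_1,w_2$ (modulo reversal), and the computation depends on the type of $W$: both in $S$, both in $B$, or one in each. For each representative path I would record, for every type, which ordered pairs of its positions are occupied by vertices of the appropriate triples. Averaging over $\Gamma$ then gives the orbit weight as a positive multiple ($36/6$ or $36/9$) of a signed count of pairs across $P_1,P_2,P_3,N$.

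There are $15$ position pairs modulo reversal. The $-1$ contribution from $N$ in any single cell is at most the number of pairs of the right type in $N$ at those positions. My first attempt would be to tabulate these $15\times3$ entries directly and check nonnegativity cell by cell. The $|W|=1$ counts, which show that the three positive paths jointly cover every position with each triple, suggest that the deficit from $N$ is always absorbed. If some cell turned out negative, the fallback would be to exploit the coarser refinement allowed in \Cref{thm:bp-general} only through the actual stabilizers. This would not help here, so the explicit table has to come out nonnegative.
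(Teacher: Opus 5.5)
\textbf{Gap: the $|W|=2$ case is never verified.} Your preliminary reductions are sound, and they differ usefully from the paper, which enumerates the $A((S\cup B)\setminus W)$-orbits directly via orbit--stabilizer and lists the intersection vectors in a table. The odd transposition $\tau\in\Gamma$ fixing $W$ pointwise shows that every $A(U)$-orbit carries either the full weight or exactly half the weight of its $\Sym(U)$-orbit. Your $\Gamma$-averaging correctly gives $72$ for $W=\varnothing$ and $24$ for $|W|=1$, hence $12$ per $A(U)$-orbit, as in the paper's table.

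But you stop exactly where the content of the lemma lies. For $|W|=2$ you only announce a tabulation and assert that it ``has to come out nonnegative''; nothing you wrote implies this. The $|W|=1$ counts are sums over the position of the second fixed vertex, so they say nothing about individual cells. Several cells are exactly $0$, for instance $w_1,w_2\in S$ at positions $1,3$ (the $P_2$-pattern \texttt{SSSBBB} is cancelled by the $N$-pattern \texttt{SBSBSB}), or $s\in S$, $b\in B$ at adjacent positions. So there is no slack for a soft argument, and your fallback is, as you note, vacuous. A minor point: since $w_1,w_2$ are fixed individually, orbits are indexed by \emph{ordered} position pairs modulo $(i,j)\mapsto(7-i,7-j)$, which is where the count $15$ comes from, not unordered pairs.

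\textbf{How to finish within your framework.} The weight of a path depends only on the set $A\subseteq\{1,\dots,6\}$ of positions occupied by $S$. Reading off your four sequences:
\begin{itemize}
\item $\lambda^{(6)}=+1$ exactly when $A$ is one of the six cyclic intervals of length three in $\mathbb{Z}_6$;
\item $\lambda^{(6)}=-1$ exactly when $A\in\{\{1,3,5\},\{2,4,6\}\}$;
\item $\lambda^{(6)}=0$ otherwise.
\end{itemize}
Call this $f(A)$. It is invariant under the dihedral symmetries of the hexagon, in particular under $i\mapsto 7-i$, so it is well defined on unoriented paths. It is also invariant under complementation.

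For $W=\{w_1,w_2\}\subseteq S$ at positions $i,j$, the $\Sym(U)$-orbit weight is $6\sum_{k\ne i,j}f(\{i,j,k\})$. This equals $12,0,0$ when the cyclic distance between $i$ and $j$ is $1,2,3$. The case $W\subseteq B$ gives the same values by complementation.

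For $W=\{s,b\}$ with $s$ at $i$ and $b$ at $j$, the weight is
$4\sum_{A\ni i,\,A\not\ni j}f(A)=4\bigl(2-\sum_{A\supseteq\{i,j\}}f(A)\bigr)$,
since each position lies in three intervals and one parity class. This equals $0,8,8$ for cyclic distance $1,2,3$.

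Halving gives $A(U)$-orbit weights in $\{0,6\}$ and $\{0,4\}$, exactly the signed sums in the paper's table. With this step added, your argument is complete and replaces the paper's case-by-case table with a short computation that also shows why so many cells are tight.
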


\begin{proof}
Let
\[
  C_1=\Gamma P_1,\qquad C_2=\Gamma P_2,
  \qquad C_3=\Gamma P_3,
  \qquad C_4=\Gamma N.
\]
For an $A((S\cup B)\setminus W)$-orbit $\mathcal O$, record the intersection
vector
\[
  v(\mathcal O)=
  (|\mathcal O\cap C_1|,|\mathcal O\cap C_2|,
    |\mathcal O\cap C_3|,|\mathcal O\cap C_4|).
\]
The signed sum on $\mathcal O$ is $v_1+v_2+v_3-v_4$.  By the symmetry exchanging
$S$ and $B$, and by the transitivity of $\Gamma$ on each triple, it suffices to
consider the four cases
\[
\begin{gathered}
  W=\varnothing,
  \qquad |W|=1,
  \qquad W\subseteq S\text{ or }W\subseteq B,\ |W|=2,\\
  \text{and}\qquad |W\cap S|=|W\cap B|=1.
\end{gathered}
\]
For each case, the acting group is the alternating group on the unfixed vertices.
For each of the four types of $W$, the fixed vertices determine finitely many
incidence patterns of the path.  For each such pattern, choose one representative
$T$, compute its stabilizer in $A((S\cup B)\setminus W)$, and use the
orbit-stabilizer identity to determine the size of its orbit.  Intersecting that
orbit with the four $\Gamma$-orbits $C_1,C_2,C_3,C_4$ gives the exhaustive list in
\Cref{tab:base-orbit-sums}.
For example, when $W\subseteq S$ has two vertices, the vector $(0,3,3,0)$ occurs
when the two fixed vertices are adjacent in the path, while $(3,0,0,3)$ occurs
when they are not adjacent but have the same incidence pattern with the moving
vertices; the other rows are obtained similarly by the position and degree pattern
of the fixed vertices along the six-vertex path.  In every listed case, the
signed sum shown in \Cref{tab:base-orbit-sums} is nonnegative.  This proves
the claim.
\end{proof}

\begin{table}[t]
\small
\centering
\begin{tabularx}{\textwidth}{@{}lX@{}}
\toprule
Type of $W$ & Possible intersection vectors $(p_1,p_2,p_3,n)$ and signed sums
$p_1+p_2+p_3-n$\\
\midrule
$|W|=0$
& $(36,36,36,36;72)$\\[0.25em]
$|W|=1$
& $(0,6,12,6;12)$, $(6,6,6,6;12)$, $(12,6,0,6;12)$\\[0.25em]
$|W|=2$, $W\subseteq S$ or $W\subseteq B$
& $(0,0,3,3;0)$, $(0,0,6,0;6)$, $(0,3,0,3;0)$, $(0,3,3,0;6)$,
  $(3,0,0,3;0)$, $(3,3,0,0;6)$, $(6,0,0,0;6)$\\[0.25em]
$|W|=2$, $|W\cap S|=|W\cap B|=1$
& $(0,0,2,2;0)$, $(0,0,4,0;4)$, $(0,2,0,2;0)$, $(0,2,2,0;4)$,
  $(0,2,4,2;4)$, $(2,0,0,2;0)$, $(2,2,0,0;4)$, $(2,2,2,2;4)$,
  $(4,0,0,0;4)$, $(4,2,0,2;4)$\\
\bottomrule
\end{tabularx}
\caption{The finite base-orbit classification for \Cref{lem:base-orbit-sums}.
Here $(p_1,p_2,p_3,n)$ abbreviates
$(|\mathcal O\cap\Gamma P_1|,|\mathcal O\cap\Gamma P_2|,
|\mathcal O\cap\Gamma P_3|,|\mathcal O\cap\Gamma N|)$, and the number after the
semicolon is the signed sum $p_1+p_2+p_3-n$.}
\label{tab:base-orbit-sums}
\end{table}

\subsection{Lifting the certificate and preserved orbit-sum nonnegativity}

Now fix $n\ge6$ and keep $S=\{1,2,3\}$.  For every auxiliary triple
$B\subseteq[n]\setminus S$ with $|B|=3$, identify $S\cup B$ with the six vertices
above.  For every spanning tree $F$ on the six-vertex core $S\cup B$ and every
attachment map
\[
  a:[n]\setminus(S\cup B)\to S\cup B,
\]
we define its lift as
\[
  \operatorname{Lift}(F,a)=F\cup\{\{c,a(c)\}:c\in[n]\setminus(S\cup B)\}.
\]
Observe that this is a spanning tree: it is obtained from the six-vertex tree
$F$ by adding all remaining vertices as leaves.  Define
\[
  \operatorname{sgn}(F)=
  \begin{cases}
    +1, & F\in \Gamma P_1\cup\Gamma P_2\cup\Gamma P_3,\\
    -1, & F\in \Gamma N,\\
    0, & \text{otherwise.}
  \end{cases}
\]
on all six-vertex trees.  Now define the lifted signed weight of a spanning tree
$T$ by
\[
  \lambda_T=
  \sum_{(B,F,a):\operatorname{Lift}(F,a)=T}\operatorname{sgn}(F).
\]

\begin{lemma}[Lifted total and slack]\label{lem:lift-total-slack}
The lifted certificate satisfies
\[
  \sum_T\lambda_T=72\cdot 6^{n-6}\binom{n-3}{3}
\]
and
\[
  \sum_T\lambda_T\bigl(2-|T\cap E(S)|\bigr)
  =-36\cdot 6^{n-6}\binom{n-3}{3}.
\]
\end{lemma}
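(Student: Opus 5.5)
The plan is to swap the order of summation: sum over the triples $(B,F,a)$ first rather than over trees $T$. By definition,
\[
  \sum_T\lambda_T=\sum_T\ \sum_{(B,F,a):\operatorname{Lift}(F,a)=T}\operatorname{sgn}(F)
  =\sum_{B}\sum_{F}\sum_{a}\operatorname{sgn}(F),
\]
because every triple $(B,F,a)$ produces exactly one spanning tree $\operatorname{Lift}(F,a)$ and is therefore counted exactly once. We never need to know whether distinct triples give the same tree, since the lifted weight already adds up all contributions with multiplicity.

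For the total, the summand does not depend on $a$. For a fixed $B$ there are $6^{n-6}$ attachment maps from the $n-6$ remaining vertices into the six-vertex core. By \Cref{lem:base-total-slack}, applied after identifying $S\cup B$ with the base six vertices, we have $\sum_F\operatorname{sgn}(F)=72$. The number of choices of $B$ is $\binom{n-3}{3}$. Multiplying these gives the first identity.

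For the slack, the key observation is $|\operatorname{Lift}(F,a)\cap E(S)|=|F\cap E(S)|$. Every added edge $\{c,a(c)\}$ has the endpoint $c\notin S\cup B$, so it never lies inside $S$. Since the slack is affine in $\chi^T$, linearity gives
\[
  \sum_T\lambda_T h_S(T)=\sum_{B,F,a}\operatorname{sgn}(F)\,h_S(\operatorname{Lift}(F,a))
  =\sum_B 6^{n-6}\sum_F\operatorname{sgn}(F)\,h_S(F).
\]
By the second part of \Cref{lem:base-total-slack}, the inner sum is $-36$, which yields the stated value.

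The only point needing care is that the identification of $S\cup B$ with $\{1,\dots,6\}$ must send $S$ to $\{1,2,3\}$ and $B$ to $\{4,5,6\}$. The $\Gamma$-orbits, and hence $\operatorname{sgn}$, do not depend on which bijection within each triple is chosen, so the base computations transfer unchanged.
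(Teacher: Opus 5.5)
Your proof is correct and follows the paper's argument: there are $\binom{n-3}{3}$ choices of $B$ and $6^{n-6}$ attachment maps per core, and each copy contributes $72$ and $-36$ by \Cref{lem:base-total-slack}. You also make explicit two points the paper leaves implicit: the added leaf edges never lie in $E(S)$, so $h_S(\operatorname{Lift}(F,a))=h_S(F)$, and $\operatorname{sgn}$ does not depend on how $S\cup B$ is identified with the base six vertices.
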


\begin{proof}
There are $\binom{n-3}{3}$ choices for the auxiliary triple $B$.  For each such
$B$, there are $6^{n-6}$ attachment maps to the six-vertex core $S\cup B$.  Each
copy contributes total weight $72$ and triangle slack $-36$ by
\Cref{lem:base-total-slack}.  Multiplying gives both identities.
\end{proof}

Next we show that the orbit sums remain nonnegative after the lift, which is essential for our proof later.  We prove a slightly stronger invariant-predicate statement, which avoids explicitly enumerating full $n$-vertex orbits.

\begin{lemma}[Attachment-count invariance]\label{lem:attachment-count}
Fix $W\subseteq[n]$ with $|W|\le2$, a core $U=S\cup B$, and a predicate
$\mathrm{Good}$ on spanning trees that is invariant under $A([n]\setminus W)$.
For a tree $F$ on $U$, let
\[
  N_{U,\mathrm{Good}}(F)=
  \card{\{a:[n]\setminus U\to U:\operatorname{Lift}(F,a)\text{ satisfies }\mathrm{Good}\}}.
\]
If $\rho\in A(U\setminus(W\cap U))$, then
\[
  N_{U,\mathrm{Good}}(F)=N_{U,\mathrm{Good}}(\rho F).
\]
\end{lemma}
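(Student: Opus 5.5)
The plan is to build an explicit bijection between the attachment maps counted by $N_{U,\mathrm{Good}}(F)$ and those counted by $N_{U,\mathrm{Good}}(\rho F)$. The bijection comes from letting $\rho$ act on the targets of the attachment map.

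\emph{Step 1: $\rho$ lies in the invariance group of $\mathrm{Good}$.} Extend $\rho\in A(U\setminus(W\cap U))$ to a permutation of $[n]$ by letting it act as the identity on $[n]\setminus U$. The extension has the same cycle structure as $\rho$ on $U$, so it is still even. It fixes $W\cap U$ pointwise by assumption. It also fixes $W\setminus U$ pointwise, because that set lies outside $U$. Hence it fixes every vertex of $W$ and acts evenly on $[n]\setminus W$, that is, $\rho\in A([n]\setminus W)$. By hypothesis, $T$ satisfies $\mathrm{Good}$ if and only if $\rho T$ does.

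\emph{Step 2: equivariance of the lift.} For any attachment map $a:[n]\setminus U\to U$, define $\rho\circ a:[n]\setminus U\to U$. This is well defined since $\rho$ preserves $U$. Apply $\rho$ to $\operatorname{Lift}(F,a)$ edge by edge. The core edges give $\rho F$, which is again a spanning tree on $U$. Each leaf edge $\{c,a(c)\}$ with $c\notin U$ goes to $\{\rho(c),\rho(a(c))\}=\{c,\rho(a(c))\}$, because $\rho$ fixes $c$. Therefore
\[
  \rho\,\operatorname{Lift}(F,a)=\operatorname{Lift}(\rho F,\rho\circ a).
\]

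\emph{Step 3: the bijection.} The map $a\mapsto\rho\circ a$ is a bijection on the set of all maps $[n]\setminus U\to U$, with inverse $a\mapsto\rho^{-1}\circ a$. By Steps 1 and 2, $\operatorname{Lift}(F,a)$ satisfies $\mathrm{Good}$ if and only if $\operatorname{Lift}(\rho F,\rho\circ a)$ does. So the bijection maps the set counted by $N_{U,\mathrm{Good}}(F)$ onto the set counted by $N_{U,\mathrm{Good}}(\rho F)$, and the two counts are equal.

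There is no real obstacle here. The only delicate point is Step 1: one must check that extending $\rho$ by the identity keeps it even and makes it fix all of $W$, including $W\setminus U$. This is exactly why the hypothesis restricts $\rho$ to $A(U\setminus(W\cap U))$, and it is also why the leaf vertices $c$ are left fixed in Step 2.
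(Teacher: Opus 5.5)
Your proposal is correct and follows the paper's proof essentially step for step: you extend $\rho$ by the identity outside $U$, note that it lies in $A([n]\setminus W)$, use the equivariance $\rho\operatorname{Lift}(F,a)=\operatorname{Lift}(\rho F,\rho\circ a)$, and count via the bijection $a\mapsto\rho\circ a$. You also spell out two details the paper leaves implicit: why the extension stays even, and why it fixes $W\setminus U$.
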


\begin{proof}
Extend $\rho$ by the identity outside $U$.  The extension lies in
$A([n]\setminus W)$.  The map
\[
  a\mapsto \rho\circ a
\]
is a bijection on attachment maps $[n]\setminus U\to U$.  Moreover
\[
  \operatorname{Lift}(\rho F,\rho\circ a)=\rho\operatorname{Lift}(F,a).
\]
Since $\mathrm{Good}$ is invariant under the extended permutation, the bijection
preserves membership in $\mathrm{Good}$.  Hence the two counts are equal.
\end{proof}

\begin{lemma}[Lifted invariant-predicate nonnegativity]\label{lem:lift-orbit-sums}
Let $W\subseteq[n]$ with $|W|\le2$, and let $\mathrm{Good}$ be any predicate on
spanning trees invariant under $A([n]\setminus W)$.  Then
\[
  \sum_T\lambda_T\mathbf 1_{\mathrm{Good}}(T)\ge0.
\]
In particular, all $A([n]\setminus W)$-orbit sums of $\lambda$ are nonnegative.
\end{lemma}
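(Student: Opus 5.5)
The plan is to expand the weighted sum according to the definition of $\lambda$, so that it splits into one contribution per core $U=S\cup B$. Each contribution will then be regrouped along orbits of the six-vertex alternating group $A(U\setminus(W\cap U))$, and \Cref{lem:attachment-count} together with \Cref{lem:base-orbit-sums} will make every regrouped term nonnegative.

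Unfolding the definition of $\lambda_T$ and interchanging the order of summation gives
\[
  \sum_T\lambda_T\mathbf 1_{\mathrm{Good}}(T)
  =\sum_{B}\sum_{F}\operatorname{sgn}(F)\,N_{U,\mathrm{Good}}(F),
\]
where $B$ ranges over the triples in $[n]\setminus S$, $U=S\cup B$, and $F$ ranges over the spanning trees on $U$. This is just counting: each triple $(B,F,a)$ contributes $\operatorname{sgn}(F)$ once, namely to the tree $\operatorname{Lift}(F,a)$. It therefore suffices to show that each inner sum over $F$ is nonnegative for a fixed core $U$.

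Fix $U$ and set $W'=W\cap U$, so that $|W'|\le 2$. Partition the six-vertex trees on $U$ into orbits $\mathcal O$ of $A(U\setminus W')$. By \Cref{lem:attachment-count}, the count $N_{U,\mathrm{Good}}$ is constant on each such orbit; call this value $N(\mathcal O)$. The inner sum then becomes
\[
  \sum_{\mathcal O}N(\mathcal O)\sum_{F\in\mathcal O}\operatorname{sgn}(F).
\]
Here every $N(\mathcal O)\ge 0$, since it is a cardinality. Each inner orbit sum $\sum_{F\in\mathcal O}\operatorname{sgn}(F)$ is an orbit sum of $\lambda^{(6)}$ and is therefore nonnegative by \Cref{lem:base-orbit-sums}. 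Hence every term is nonnegative, and so is the whole sum.

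The one point requiring care, which I expect to be the main obstacle, is transporting \Cref{lem:base-orbit-sums} to an arbitrary core $U=S\cup B$ with an arbitrary $W'\subseteq U$. The identification of $B$ with $\{4,5,6\}$ is only fixed up to $\Sym(B)$. However, $\operatorname{sgn}$ is defined through $\Gamma$-orbits, so it is invariant under this choice and thus well defined. Moreover, a bijection $U\to\{1,\dots,6\}$ sending $S$ to $\{1,2,3\}$ and $B$ to $\{4,5,6\}$ conjugates $A(U\setminus W')$ to $A(\{1,\dots,6\}\setminus W'')$ with $|W''|=|W'|\le2$. This conjugation maps orbits to orbits and preserves $\operatorname{sgn}$, so the base lemma applies verbatim.

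For the final assertion, take $\mathrm{Good}$ to be membership in a fixed $A([n]\setminus W)$-orbit $\mathcal O$ of spanning trees on $[n]$. This predicate is invariant under $A([n]\setminus W)$, and its weighted sum is exactly the orbit sum $\sum_{T\in\mathcal O}\lambda_T$, which is therefore nonnegative.
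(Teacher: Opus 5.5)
Your proof is correct and is essentially the paper's argument. You expand $\lambda$ into per-core contributions $\sum_F\operatorname{sgn}(F)N_U(F)$, use \Cref{lem:attachment-count} to make $N_U$ constant on $A(U\setminus(W\cap U))$-orbits, conclude with \Cref{lem:base-orbit-sums}, and then specialize $\mathrm{Good}$ to orbit membership. Your remarks that $N_U\ge 0$ and that the base lemma transfers to an arbitrary core via a relabeling $U\to\{1,\dots,6\}$ (well defined because $\operatorname{sgn}$ is $\Gamma$-invariant) spell out steps the paper leaves implicit.
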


\begin{proof}
Fix a core $U=S\cup B$ and write $W_U=W\cap U$.  For a six-vertex tree $F$ on
$U$, let
\[
  N_U(F)=\card{\{a:[n]\setminus U\to U:\operatorname{Lift}(F,a)\text{ satisfies }\mathrm{Good}\}}.
\]
By \Cref{lem:attachment-count}, the function $N_U(F)$ is constant on every orbit
of $A(U\setminus W_U)$ acting on the spanning trees of the core $U$.  The contribution of
this fixed core to the signed sum is
\[
  \sum_F \operatorname{sgn}(F)N_U(F),
\]
where the sum is over all spanning trees on $U$.  Decompose the support of
$\operatorname{sgn}$ into $A(U\setminus W_U)$-orbits.  On each such orbit
$\mathcal O'$, the value $N_U(F)$ is constant, so the contribution of
$\mathcal O'$ equals that constant times
\[
  \sum_{F\in\mathcal O'}\operatorname{sgn}(F).
\]
Since $\card{W_U}\le2$, this inner signed sum is nonnegative by
\Cref{lem:base-orbit-sums}.  Hence the contribution of the core $U$ is
nonnegative.  Summing over all choices of the auxiliary triple $B$ preserves
nonnegativity.  Finally, taking $\mathrm{Good}$ to be membership in a fixed full
$A([n]\setminus W)$-orbit gives the orbit-sum statement.
\end{proof}

\section{Proof of the main theorem}

We are now ready to put everything together and prove \cref{thm:main}.

\begin{proof}[Proof of \cref{thm:main}]
Suppose, for contradiction, that
$P_{\ST}(K_n)$ has an $A_n$-symmetric extended formulation with fewer than
$\binom n3$ facets.  Let $\lambda$ be the lifted signed certificate above and set
\[
  c_T=\frac{\lambda_T}{\sum_U\lambda_U}.
\]
By \Cref{lem:lift-total-slack}, the denominator is positive, so
$\sum_Tc_T=1$.  By \Cref{lem:lift-orbit-sums}, the coefficients have
nonnegative sums on every $A([n]\setminus W)$-orbit for every $|W|\le2$.
The $k=3$ case of \Cref{cor:bp-k} implies that
\[
  y=\sum_T c_T\chi^T
\]
lies in the spanning tree polytope.

However, again by \Cref{lem:lift-total-slack},
\[
  \sum_T c_T\bigl(2-|T\cap E(S)|\bigr)<0.
\]
Equivalently,
\[
  y(E(S))>2,
\]
which violates the triangle subtour inequality $x(E(S))\le2$ valid for
$P_{\ST}(K_n)$.  This contradiction proves
\[
  \xcs(P_{\ST}(K_n))\ge \binom n3
\]
for all $n\ge13$.  The matching upper bound
$\xcs(P_{\ST}(K_n))=O(n^3)$ follows from \Cref{prop:symmetric-upper},
so $\xcs(P_{\ST}(K_n))=\Theta(n^3)$.
\end{proof}

\section*{Acknowledgments}
The author would like to thank Kanstantsin Pashkovich and Christoph Hertrich for the recent discussions on extended formulations that prompted the author to revisit the question as well as Gábor Braun for comments on an earlier draft. Research reported in this paper was partially supported by the Berlin Mathematics Research Center MATH$^+$ (EXC-2046/2, project ID 390685689), funded by the Deutsche Forschungsgemeinschaft (DFG, German Research Foundation) under Germany's Excellence Strategy.

Large language models (LLMs) were used to assist with a companion Lean 4 formalization, which will be made available on GitHub.

\bibliographystyle{plainnat}
\bibliography{refs}

\end{document}